\def\ps@pprintTitle{%
     \let\@oddhead\@empty
     \let\@evenhead\@empty
     \def\@oddfoot{\reset@font\hfil\thepage\hfil}
     \let\@evenfoot\@oddfoot}
\newcommand{\Rmnum}[1]{\expandafter\@slowromancap\romannumeral #1@}
 \newtheorem{lemma}{Lemma}[section]
  \newtheorem{theorem}{Theorem}[section]
  \newtheorem{proposition}{Proposition}[section]
    \newtheorem{remark}{Remark}[section]
\begin{document}

\begin{frontmatter}
\title{ Monotonicity  of the periodic waves for the perturbed generalized defocusing mKdV equation}

\author{Lin Lu}
\author{Aiyong Chen\corref{mycorrespondingauthor}}
\cortext[mycorrespondingauthor]{Corresponding author.}
\ead{aiyongchen@163.com}
\author{Xiaokai He}
\address{School of Mathematics and Statistics, Hunan First Normal University,
Changsha, 410205,  P R China}

%
%

\begin{abstract}
In this paper, we study the existence of periodic waves for the perturbed  generalized defocusing mKdV equation
 using the theory of geometric singular perturbation. 
 By Abelian integral and involution operation, we prove that the limit wave speed
  $c_0(h)$ is monotonic with respect to energy $h$,
 and  the lower bound of the limit wave speed is found. 
 These works extend the main result of Chen et al. (2018) to the generalized  case. 
Some numerical simulations are conducted to verify the correctness of the theoretical analysis.
\end{abstract}
 \begin{keyword}
 Periodic wave; Monotonicity; perturbed analysis;  Defocusing mKdV equation.
  \vspace{0.8em}
  \MSC[2020] 34C25;  34C60; 37C27
 \end{keyword}
\makeatletter
\@namedef{subjclassname@1991}{Subject}
\@namedef{subjclassname@2000}{Subject}
\@namedef{subjclassname@2010}{Subject}
\makeatother
\end{frontmatter}

\section{Introduction}\label{s1}

The focusing and defocusing KdV equations are two different forms of nonlinear wave equations, which have significant differences 
in physical background and solution properties.
The focusing KdV equation can be represented by
\begin{equation}\label{F-kdv}
 u_t+u u_x+u_{xxx}=0.
 \end{equation}
Equation  (\ref{F-kdv})  can be used to describe surface waves or unstable wave modes, which may lead to explosive growth of waves.
  The solutions to  equation (\ref{F-kdv}) usually exhibit wave focusing phenomena, and 
the sharpening of waves may lead to the formation of  singular solutions. 
The defocusing KdV equation is
\begin{equation}\label{deF-kdv}
 u_t-u u_x+u_{xxx}=0,
  \end{equation}
   which is mainly used to study soliton propagation and stable waves. 
The solutions to equation  (\ref{deF-kdv}) often exhibit soliton waves, which have relatively stable properties.
 The gradual dispersion of waves during diffusion generally cannot lead to solution explosion.
To further investigate the influence of nonlinear term  on the dynamic behavior of  equations  (\ref{F-kdv})  and  (\ref{deF-kdv}),
 many scholars have modified the nonlinear term of the above two equations,
  and referred to the modified equations as the  focusing  mKdV equation and the  defocusing mKdV equation, respectively.

  The  focusing and defocusing mKdV equations have more types of solutions, such as soliton solutions, 
  multiple soliton solutions and periodic solutions and so on.
In recent years, many scholars have studied the dynamic behavior of the focusing and defocusing mKdV equations. For instance, 
Christ et al. \cite{Christ2003} found some modified scattering solutions
 that exist globally in time for the defocusing  mKdV  equation.
Zhang and Yan \cite{ZhangG2020,ZhangG2020410} have studied the inverse scattering transformation and soliton solutions and 
 soliton interaction of the focusing and defocusing mKdV equations under non-zero boundary conditions.
Chen and Fan \cite{ChenM2020} discussed the long-time asymptotic behavior of the solution for the discrete defocusing mKdV equation 
 by using the  nonlinear steepest descent method.
Wang et al. \cite{WangDS2022} investigated 
the complete classification of solutions to the defocusing complex mKdV equation.
Mucalica and Pelinovsky \cite{Mucalica2024} 
 constructed a new exact solution for the interaction between dark solitons and periodic waves in the defocusing KdV equation.
Xu et al. \cite{XuT2003} studied the long-term asymptotic behavior in soliton free regions for the defocusing mKdV equation 
with finite density initial data.

Many scholars have conducted relevant research on the existence of  periodic solutions for the perturbed  focusing mKdV equation.
In 2014, Yan et al. \cite{YanW2014} investigated the existence of solitary waves and periodic waves to 
the perturbed generalized focusing mKdV equation
\begin{equation}\label{focusing-eq1}
U_t+U^n U_x +U_{xxx} +\epsilon (U_{xx} + U_{xxxx}) =0,
\end{equation}
where $n>0$ is an integer  and $\epsilon>0$ is a small parameter. 
When $n=1,$ Ogawa \cite{Ogawa1994} proved that the limit wave speed $c_0 (h)$ of periodic 
waves of equation (\ref{focusing-eq1}) is monotonic with respect to $h$.
When $n=2,3,4,$ some similar results are found by  the authors \cite{Chen2023,Chen2021}. 
Recently, Patra and Rao \cite{Patra2024} studied the monotonicity of limit wave speed $c_0(h)$ of
 periodic  solutions of equation (\ref{focusing-eq1}) for any positive integer $n$.  
In 2018, Chen et al. \cite{ChenA2018}  considered the existence of kink waves and periodic waves of 
  the perturbed  defocusing mKdV equation
\begin{equation*}
U_t-U^2 U_x +U_{xxx} +\epsilon (U_{xx} + U_{xxxx}) =0,
\end{equation*}
where $\epsilon>0$ is a small parameter.

 Motivated by the above works, in the present paper we consider 
the perturbed  generalized  defocusing mKdV equation
\begin{equation}\label{1}
U_t-U^n U_x +U_{xxx} +\epsilon (U_{xx} + U_{xxxx}) =0,
\end{equation}
where $n>0$ is an integer and $\epsilon>0$ is a small parameter.  
We study the monotonicity of the limit wave speed  of  periodic traveling waves of equation (\ref{1}).
The stability of periodic waves is closely related to the monotonicity of period function and  limit wave speed,
and the studies of the monotonicity of period function and limit wave speed have been widely used
 in various fields such as fluid dynamics, physics, and optics
  \cite{Chen2018,Chen2016,LuL2024,LuL2023,Geyer2022,Johnson2014,WeiM2024,FanF2024}.
 Our works of this paper can extend the main result of \cite{ChenA2018} to the generalized case.

The rest of this paper is arranged as follows. In Section \ref{s2},  we discuss the phase portraits of
the unperturbed system of equation (\ref{1}) and we give the main result of this paper. 
In Section  \ref{s3}, the perturbation analysis is used to study the existence of periodic solutions of 
equation  (\ref{1}).
 In Section \ref{szeng3}, 
  we complete the proof of the main result  of Section \ref{s2} by using  Abelian integral and involution.

\section{ Main result  }  \label{s2}

Let
\begin{equation}\label{2}
 U(x,t)=U(\xi),~~~~~~~~~~~\xi=x+ c t,
\end{equation}
Where $c>0$ is a parameter.
Substituting (\ref{2}) into equation (\ref{1}), we have
\begin{equation}\label{3}
 c \frac{\mathrm{d} U}{\mathrm{d} \xi} - U^n  \frac{\mathrm{d} U}{\mathrm{d} \xi}  + \frac{\mathrm{d}^3 U}{\mathrm{d} \xi^3} +
  \epsilon \left(  \frac{\mathrm{d}^2 U}{\mathrm{d} \xi^2}   +  \frac{\mathrm{d}^4 U}{\mathrm{d} \xi^4}   \right)=0.
\end{equation}
 Integrating equation (\ref{3}) and
null the integration constant, we obtain
\begin{equation}\label{4}
 c U - \frac{1}{n+1} U^{n+1}  + \frac{\mathrm{d}^2 U}{\mathrm{d} \xi^2} +
  \epsilon \left(  \frac{\mathrm{d} U}{\mathrm{d} \xi}   +  \frac{\mathrm{d}^3 U}{\mathrm{d} \xi^3}   \right)=0.
\end{equation}
Let 
\begin{equation}\label{5}
 U=\sqrt[n]{c} u, ~~~~~~~~~~ \xi= \frac{\tau}{\sqrt{c}}.
\end{equation}
Then  equation (\ref{4}) becomes 
\begin{equation}\label{6}
 u - \frac{1}{n+1} u^{n+1}  + \frac{\mathrm{d}^2 u}{\mathrm{d} \tau^2} +
  \epsilon \left(  \frac{\mathrm{d} u}{\mathrm{d} \tau}  \frac{1}{\sqrt{c}}   + \sqrt{c} \frac{\mathrm{d}^3 u}{\mathrm{d} \tau^3}   \right)=0.
\end{equation}
If $\epsilon = 0$, then equation  (\ref{6}) is an unperturbed Hamiltonian system
\begin{equation}\label{8}
 \left\{\begin{split}
&\dfrac{\mathrm{d}u}{\mathrm{d} \tau}=v,\\
&\dfrac{\mathrm{d}v}{\mathrm{d} \tau}=-u + \frac{1}{n+1} u^{n+1},\\
\end{split}\right.
\end{equation}
with Hamiltonian
\begin{equation}\label{9}
 H(u,v)=\frac{v^2}{2} + W(u),
\end{equation}
where $W(u)=\frac{  u^2  }{2} - \frac{ u^{n+2} }{(n+1)(n+2)}.$

When $n>0$ is an even number, system (\ref{8}) has three equilibrium points $O(0,0)$ and $E_{1 \pm} (\pm \sqrt[n]{n+1},0).$
 By the theory of planar dynamical systems \cite{Chow1981,LiJB2013}, we obtain that $O(0,0)$ is a center and $E_{1 \pm} (\pm \sqrt[n]{n+1},0)$ are two saddles. 
Similarly, when $n>0$ is an odd number, system (\ref{8}) has two equilibrium points $O(0,0)$ and $E_{1 +} (\sqrt[n]{n+1},0),$
 where $O(0,0)$ is a center and $E_{1 +} ( \sqrt[n]{n+1},0)$ is a saddle. 
  The phase portraits of system (\ref{8}) are plotted in Figure  \ref{Fig1}.
  Therefore, for any positive integer $n$, the equilibrium point $O(0,0)$ 
is a center of the unperturbed  system  (\ref{8}).
 Note that $H(0,0)=0$ and $H(\pm \sqrt[n]{n+1},0)=\frac{n(n+1)^{ \frac{2}{n} }   }{2(n+2)}.$
 When $H(u,v)=h$ and $h \in (0,\frac{n(n+1)^{ \frac{2}{n} }   }{2(n+2)}),$ 
 there exist a set of periodic orbits surrounding the center $O(0,0).$
  We want to discuss the existence of periodic solutions  of the perturbed equation   (\ref{6}). 
 In the following,  we give the main result of this paper.
 \begin{figure}[H]
\centering
\subfigure[ When $n$ is an even number.]{
\includegraphics[width=2.3in]{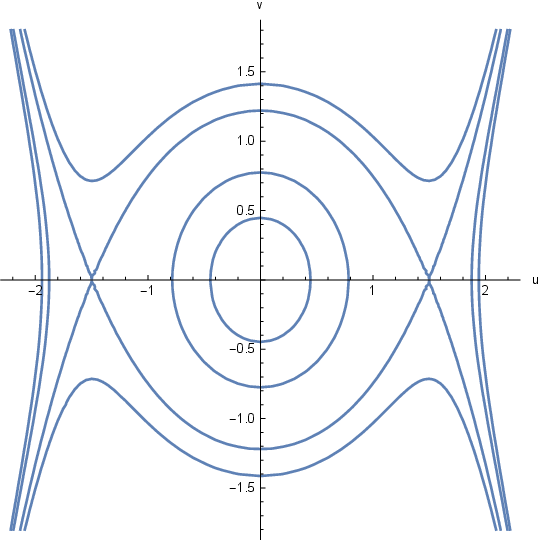}}
\hspace{0.2in}
\subfigure[When $n$ is an odd number.]{
\includegraphics[width=2.3in]{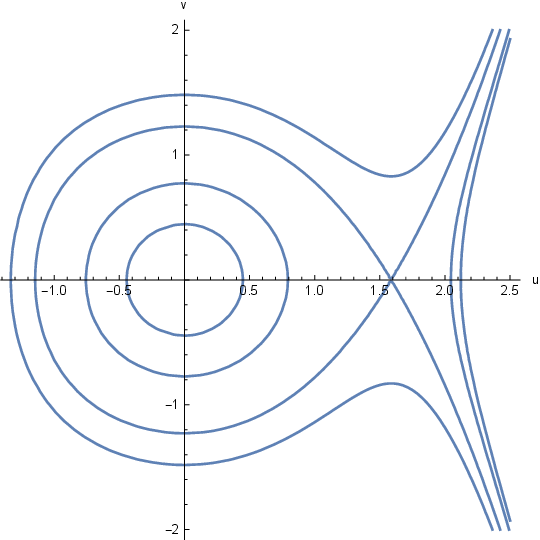}}
\caption{  The phase portraits of system (\ref{8}) \label{Fig1}.}
\end{figure}

 \begin{theorem}\label{theorem-0}
 For any  positive integer $n$, let $$d_n= \frac{n(n+1)^{ \frac{2}{n} }   }{2(n+2)}.$$ Then
 there exists $\epsilon_n^*>0$ such that for each $\epsilon \in (0,\epsilon_n^*)$
and $h\in(0,d_n),$ equation (\ref{1}) has a traveling wave solution 
\begin{equation}
U =  \sqrt[n]{c} u(\epsilon,h,c,\tau),
\end{equation}
where  $c=c(\epsilon,h)$, and $u(\epsilon,h,c,\tau)$ is
a solution of equation  (\ref{6}). 
Furthermore, let
 $c_0(h)=\lim\limits_{\epsilon\rightarrow 0}c(\epsilon,h),$
then $c_0(h)$ satisfies $c_0^\prime(h)>0$  for  $ h \in (0,d_n) $
and $\lim\limits_{h \rightarrow 0} c_0(h)=1.$
\end{theorem}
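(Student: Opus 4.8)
The plan is to use geometric singular perturbation theory (GSPT) together with a Melnikov-type bifurcation analysis, exactly in the spirit of the references \cite{ChenA2018,Ogawa1994}. First I would rewrite the perturbed fourth-order equation (\ref{6}) as a four-dimensional slow--fast system by introducing the variables $y=\dot u$, $z=\ddot u$, $w=\dddot u$ (or, more efficiently, by treating $\sqrt{c}$ as carrying the slow scale). Exploiting the term $\epsilon\sqrt{c}\,\dddot u$, one sets $\epsilon\sqrt{c}=\delta$ and obtains a system with a critical manifold on which the flow reduces, to leading order, to the planar Hamiltonian system (\ref{8}). Fenichel's theorem then guarantees persistence of a locally invariant slow manifold for small $\epsilon$, on which the reduced dynamics is a small regular perturbation of (\ref{8}); the periodic orbits surrounding the center $O(0,0)$ for $h\in(0,d_n)$ survive as periodic orbits of the full system precisely when an appropriate Melnikov (Abelian integral) condition selects the wave speed $c$. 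This yields the existence statement: $c=c(\epsilon,h)$ and $U=\sqrt[n]{c}\,u(\epsilon,h,c,\tau)$.

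The second and main part is the monotonicity of $c_0(h)=\lim_{\epsilon\to 0}c(\epsilon,h)$. The key is that the bifurcation function, obtained by projecting the perturbation onto the periodic orbits $\Gamma_h=\{H(u,v)=h\}$, has the form of a ratio of two Abelian integrals. Concretely, writing the $O(\epsilon)$ terms of (\ref{6}) as a perturbation $\epsilon(\tfrac{1}{\sqrt c}\dot u+\sqrt c\,\dddot u)$, the solvability (Pontryagin / Melnikov) condition forces
\begin{equation}\label{speed-formula}
c_0(h)=\frac{\displaystyle\oint_{\Gamma_h} v^2\,du}{\displaystyle\oint_{\Gamma_h} \big(\text{something built from }W''\text{ and }v^2\big)\,du}
=\frac{I_1(h)}{I_2(h)},
\end{equation}
i.e. $c_0(h)$ equals a quotient of Abelian integrals over the oval $\Gamma_h$. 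To prove $c_0'(h)>0$ I would show that the pair $(I_1,I_2)$ satisfies a Picard--Fuchs system (a first-order linear ODE system in $h$ with rational coefficients), derive from it an expression for the Wronskian-type quantity $I_1'I_2-I_1I_2'$, and then prove this quantity has a fixed sign on $(0,d_n)$. This is where the ``involution'' trick advertised in the abstract enters: the substitution $u\mapsto$ (the conjugate root of $W(u)=W(\bar u)$ at fixed level), i.e. the involution exchanging the two branches of the oval, lets one symmetrize the integrands and reduce the sign question to positivity of an explicitly computable integrand, eliminating the indefinite terms. Establishing the sign of that symmetrized integrand for \emph{all} positive integers $n$ simultaneously is, I expect, the main obstacle — the case analysis in $n$ (parity of $n$, behavior of $W$) is what made the earlier papers treat only small $n$, so the technical heart is a uniform-in-$n$ estimate.

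Finally, for the boundary behavior $\lim_{h\to 0}c_0(h)=1$, I would use the local analysis of (\ref{8}) near the center $O(0,0)$: for small $h$ the oval $\Gamma_h$ is an ellipse of ``energy'' $h$ for the linearized system $\ddot u+u=0$, the Abelian integrals $I_1(h),I_2(h)$ admit convergent expansions $I_j(h)=a_j h+O(h^2)$ with $a_j$ computed from the harmonic oscillator, and the leading coefficients are arranged so that $a_1/a_2=1$. (This is consistent with the scaling (\ref{5}) that was chosen to normalize the linear wave speed to $1$.) So in outline: (i) GSPT/Fenichel reduction to a perturbed planar system and Melnikov selection of $c$, giving existence; (ii) identify $c_0(h)$ as the Abelian-integral quotient (\ref{speed-formula}); (iii) Picard--Fuchs equations plus the involution symmetrization to get a sign-definite formula for $(c_0)'$, yielding $c_0'(h)>0$ on $(0,d_n)$ — the hard uniform-in-$n$ step; (iv) small-$h$ asymptotics of the ovals to get $c_0(0^+)=1$.
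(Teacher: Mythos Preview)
Your outline is essentially right in structure (Fenichel reduction, Melnikov/Abelian-integral selection of $c$, monotonicity via an involution, and small-$h$ asymptotics), and it matches the paper closely on steps (i), (ii) and (iv). One procedural point: the paper integrates once with zero constant before setting up the slow--fast system, so the reduction is three-dimensional rather than four, and the reduced planar perturbation on $M_\epsilon$ comes out explicitly as $\dot v=-W'(u)+\epsilon(-\sqrt c)\big(u^n+(-1+\tfrac1c)\big)v+O(\epsilon^2)$; this immediately gives the Abelian integral in the clean form $I(h)=\sqrt{c}\,B_0(h)\big((1-\tfrac1c)-\tilde B_n(h)\big)$ with $\tilde B_n(h)=B_n(h)/B_0(h)=\oint_{\Gamma_h}u^n v\,du\big/\oint_{\Gamma_h}v\,du$, so that $c_0(h)=1/(1-\tilde B_n(h))$ rather than a vague quotient as in your \eqref{speed-formula}.

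The substantive divergence is in step (iii). You propose to derive Picard--Fuchs equations for $(I_1,I_2)$ and then control the Wronskian with an involution symmetrization. The paper bypasses Picard--Fuchs entirely: it invokes a ready-made monotonicity criterion (their Lemma~4.2, from \cite{Patra2024,WeiM2025}) stating that if $T_n(u)=(n+1)\int_{\eta(u)}^{u}t^n\,dt\big/\int_{\eta(u)}^{u}dt=\sum_{k=0}^{n}u^{n-k}\eta^k$ is increasing on the half-oval, then $\tilde B_n'(h)>0$. The required inequality $T_n'(u)>0$ is then a purely algebraic statement about symmetric polynomials in $(u,\eta)$ subject to the constraint $W(u)=W(\eta)$, and the paper settles it for \emph{all} $n$ at once by quoting an identity of Patra--Rao (their Lemma~4.1) that rewrites the relevant combination as $-\tfrac{n}{(n+1)(n+2)}\sum_{k}(u^{n-2k}-\eta^{n-2k})^2(u\eta)^{2k}$, which is manifestly sign-definite. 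So the ``uniform-in-$n$ estimate'' that you flag as the main obstacle is handled not by analysis of a differential system but by a closed-form algebraic identity; your Picard--Fuchs/Wronskian route might succeed, but it is neither what the paper does nor the path of least resistance here.
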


\section{Perturbation analysis}\label{s3}

We first introduce a technical lemma.

\begin{lemma}[\cite{YanW2014}]      \label{Le1}
Consider the system
\begin{equation}\label{10}
 \left\{\begin{split}
&  \dot{x} = f(x,y,\epsilon),\\
&   \dot{y} = \epsilon g(x,y,\epsilon),\\
\end{split}\right.
\end{equation}
 where $x \in \mathbf{R}^n$, $y  \in \mathbf{R}^l$ and $\epsilon$ is a real parameter,
  $f$ and $g$ are $C^\infty$ on the set $V \times Y$,
where $V \in \mathbf{R}^{n+l}$ and $Y$ is an open interval containing zero. Assume that for $\epsilon = 0$, the
system has a compact normally hyperbolic manifold $M_0$ which is contained in the set
$f (x, y, 0) = 0$. The manifold $M_0$ is said to be normally hyperbolic if the linearization
of (\ref{10}) at each point in $M_0$ has exactly $\mathrm{dim}(M_0)$ eigenvalues on the imaginary axis
 $\mathfrak{R}(\lambda) = 0$. Then for any
$0 < r < +\infty$, if $\epsilon > 0$ sufficiently small, there exists a manifold $M_\epsilon$
such that the following conclusions hold.

(1) $M_\epsilon$ is locally invariant under the flow of (\ref{10}).

(2)  $M_\epsilon$ is $C^r$ in $x, y$ and $\epsilon$.

(3) $M_\epsilon = \{(x, y) | x = h^ \epsilon(y)\}$ for some $C^r$ function $h^\epsilon$ and $y$ in some compact set $K$.

(4) There exist locally invariant stable and unstable manifolds $W^s(M_\epsilon)$ and $W^u(M_\epsilon)$
which are  $O(\epsilon)$ close and  diffeomorphic to $W^s(M_0)$ and $W^u(M_0)$, respectively.
\end{lemma}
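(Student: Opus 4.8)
The plan is to prove Lemma~\ref{Le1} as an instance of Fenichel's invariant-manifold theorem for slow--fast systems, via the graph-transform (Lyapunov--Perron) method. First I would reduce to a convenient normal form. Since the desired object is the graph $x = h^\epsilon(y)$ and $M_0 = \{x = h^0(y)\}$ for the $C^\infty$ function $h^0$ solving $f(h^0(y),y,0)=0$ over the compact set $K$, I introduce the shifted variable $w = x - h^0(y)$. In the $(w,y)$ coordinates the critical manifold becomes $M_0 = \{w = 0\}$, and system~(\ref{10}) takes the form
\begin{equation*}
\dot w = A(y)\,w + N(w,y,\epsilon), \qquad \dot y = \epsilon\, \tilde g(w,y,\epsilon),
\end{equation*}
where $A(y) = D_x f(h^0(y),y,0)$ and $N = O(|w|^2) + O(\epsilon)$. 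At $\epsilon=0$ the full linearization along $M_0$ is block upper-triangular with an $l\times l$ zero block coming from $\dot y = 0$; normal hyperbolicity says exactly $\dim(M_0)=l$ eigenvalues lie on $\mathfrak{R}(\lambda)=0$, so these are precisely those $l$ zeros and the fast block $A(y)$ has \emph{no} eigenvalue on the imaginary axis. By compactness of $K$ there is a uniform spectral gap: $\mathrm{spec}\,A(y)$ splits into a stable part with $\mathfrak{R}(\lambda)\le-\alpha$ and an unstable part with $\mathfrak{R}(\lambda)\ge\beta$ for constants $\alpha,\beta>0$, with associated continuous splitting $\mathbf{R}^n = E^s(y)\oplus E^u(y)$.

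Next I append the trivial equation $\dot\epsilon = 0$ and seek $M_\epsilon$ as an invariant graph $w = \eta(y,\epsilon)$ with $\eta(\cdot,0)=0$. Invariance under the flow is equivalent to the first-order PDE
\begin{equation*}
D_y\eta(y,\epsilon)\cdot\epsilon\,\tilde g(\eta,y,\epsilon) = A(y)\,\eta + N(\eta,y,\epsilon).
\end{equation*}
Because the transport term on the left is slow ($O(\epsilon)$), I solve this by a contraction argument: after multiplying $N$ by a cut-off supported in a thin tubular neighbourhood of $M_0$, I recast the invariance condition as a fixed point in the Banach space of bounded continuous sections $y\mapsto\eta(y,\epsilon)$ over $K$, using the variation-of-constants formula along the slow flow together with the exponential dichotomy generated (by roughness) from the uniform hyperbolicity of $A(y)$. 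Hyperbolicity makes the linearized operator boundedly invertible, and the $\epsilon$ factor in $\dot y$ together with the cut-off makes the nonlinear map a contraction for $\epsilon$ small; its unique fixed point yields a locally invariant $M_\epsilon$ expressible as a graph, establishing conclusions~(1) and~(3).

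For the regularity claim~(2) I would bootstrap smoothness with the fibre-contraction theorem: formally differentiating the fixed-point equation in $y$ and $\epsilon$ produces fixed-point equations for the derivatives whose contraction rates are controlled by the ratio of the slow rate to the gap $\alpha,\beta$. Since the tangential eigenvalues on $M_0$ are \emph{exactly} zero while the normal ones are bounded away from the axis, the $r$-normal-hyperbolicity gap condition holds for every finite $r$, giving $C^r$ dependence on $x,y,\epsilon$ for each $0<r<+\infty$. Finally, conclusion~(4) follows by running the same graph-transform construction over the stable and unstable bundles $\bigcup_{y} E^s(y)$ and $\bigcup_{y} E^u(y)$, producing locally invariant $W^s(M_\epsilon)$ and $W^u(M_\epsilon)$; smooth dependence on $\epsilon$ with $\eta(\cdot,0)=0$ then gives the $O(\epsilon)$-closeness and the diffeomorphism to $W^s(M_0)$ and $W^u(M_0)$. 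The main obstacle is not the bare persistence, which is a routine contraction once the dichotomy is in place, but securing \emph{uniform} $C^r$ estimates: the derivative fixed points converge only when the spectral gap dominates $r$ times the slow expansion rate, so the bulk of the work lies in bookkeeping the gap condition and choosing the cut-off so that the fibre contraction closes.
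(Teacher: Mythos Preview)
The paper does not prove this lemma at all: it is quoted verbatim as a technical tool from \cite{YanW2014} (itself a restatement of Fenichel's geometric singular perturbation theorem), and the authors simply invoke it in Section~\ref{s3} to obtain the slow manifold $M_\epsilon$ for system~(\ref{7}). So there is no ``paper's own proof'' to compare your proposal against.

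That said, your sketch is the standard route to Fenichel's theorem and is sound in outline: straighten $M_0$ to $\{w=0\}$, use normal hyperbolicity plus compactness of $K$ to get a uniform exponential dichotomy for the fast linearization $A(y)$, set up the invariance equation as a fixed point via variation of constants and a cut-off, and then bootstrap $C^r$ smoothness by fibre contraction using that the tangential eigenvalues are exactly zero so the $r$-normal-hyperbolicity gap condition holds for every finite $r$. The one place where your write-up is slightly loose is the invocation of ``roughness'' of the dichotomy: since $y$ varies only at rate $O(\epsilon)$, the linear cocycle along a slow trajectory is a small perturbation of the frozen operator $A(y_0)$ on bounded time windows, and you need to say this explicitly to justify the uniform dichotomy constants used in the contraction. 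Once that is made precise, conclusions (1)--(4) follow as you describe. For the purposes of this paper, though, a citation suffices; a full proof would be out of scope.
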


Obviously, the perturbed equation   (\ref{6})  is equivalent to the following dynamical system
 \begin{equation}\label{7}
 \left\{\begin{split}
&\dfrac{\mathrm{d}u}{\mathrm{d} \tau}=v,\\
&\dfrac{\mathrm{d}v}{\mathrm{d} \tau}=w,\\
& \epsilon \sqrt{c} \dfrac{\mathrm{d}w}{\mathrm{d} \tau}=-u + \frac{1}{n+1} u^{n+1} -w -\frac{\epsilon}{\sqrt{c}} v.
\end{split}\right.
\end{equation}
When $\epsilon>0,$ the solutions of system (\ref{7}) evolve in the three-dimensional 
$(u, v, w)$ phase space, and system (\ref{7}) has three
equilibria  $(0,0,0)$ and $( \pm \sqrt[n]{n+1},0,0)$ in this phase space.
Taking the transformation $\tau= \epsilon \lambda,$ system (\ref{7})  yields
 \begin{equation}\label{11}
 \left\{\begin{split}
&\dfrac{\mathrm{d}u}{\mathrm{d} \lambda}= \epsilon v,\\
&\dfrac{\mathrm{d}v}{\mathrm{d} \lambda}= \epsilon w,\\
&  \sqrt{c} \dfrac{\mathrm{d}w}{\mathrm{d} \lambda}=-u + \frac{1}{n+1} u^{n+1} -w -\frac{\epsilon}{\sqrt{c}} v.
\end{split}\right.
\end{equation}
We can see that  system (\ref{7})  is  equivalent to  system (\ref{11}) for $\epsilon > 0$.

When $\epsilon\rightarrow0,$ there exists a critical manifold 
\begin{equation*}
M_0 =  \left\{ (u,v,w)\in \mathbf{R}^3  \Big|  w= -u + \frac{1}{n+1} u^{n+1}         \right \}  
\end{equation*}
for system (\ref{7}).
When $\epsilon\rightarrow0,$ system (\ref{11}) goes to 
\begin{equation}\label{12}
 \left\{\begin{split}
&\dfrac{\mathrm{d}u}{\mathrm{d} \lambda}= 0,\\
&\dfrac{\mathrm{d}v}{\mathrm{d} \lambda}= 0,\\
&  \sqrt{c} \dfrac{\mathrm{d}w}{\mathrm{d} \lambda}=-u + \frac{1}{n+1} u^{n+1} -w.
\end{split}\right.
\end{equation}
The linearization of  (\ref{12})  is given by the matrix 
\begin{equation*}
\left(
\begin{array}{ccc}
0 & 0  &  0 \\
0 & 0  &  0 \\
\frac{1}{\sqrt{c}}(-1+u^n)   &   0   & -\frac{1}{\sqrt{c}}\\
\end{array} \right ).
\end{equation*}
Since the eigenvalues of the above matrix are $0, 0$ and $-\frac{1}{\sqrt{c}},$ 
it follows that $M_0$ is normally hyperbolic. From Lemma \ref{Le1}, we obtain that for $\epsilon>0$ sufficiently small,
there exists a two-dimensional submanifold $M_\epsilon$ which is invariant under the flow of  (\ref{7}). Moreover, $M_\epsilon$
can be written as
\begin{equation*}
M_\epsilon =  \left\{ (u,v,w)\in \mathbf{R}^3  \Big|  w= -u + \frac{1}{n+1} u^{n+1}  + g_1(u,v,\epsilon)       \right \} , 
\end{equation*}
where $g_1(u,v,\epsilon)$ is a smooth function with respect to $\epsilon$ and satisfies  $g_1(u,v,0)=0.$
By using Taylor series
$$ g_1(u,v,\epsilon)=\epsilon g_2(u,v) + O(\epsilon^2),$$ 
the function $w $ can be rewritten as 
\begin{equation}\label{13}
 w= -u + \frac{1}{n+1} u^{n+1}  + \epsilon g_2(u,v) + O(\epsilon^2).
\end{equation}
By substituting  (\ref{13}) into the third equation of system  (\ref{7}), we have
\begin{equation}\label{14}
 \sqrt{c} (-1+u^n) \epsilon v  + O(\epsilon^2) = - \epsilon g_1(u,v) - \frac{\epsilon}{\sqrt{c}} v + O(\epsilon^2),
\end{equation}
which derives 
\begin{equation*}
g_2(u,v) =  - \sqrt{c} \left( u^n v + (-1  + \frac{1}{c}) v    \right).
\end{equation*}
Thus, the system (\ref{7}) on $M_\epsilon$ is given by
 \begin{equation}\label{15}
 \left\{\begin{split}
&\dfrac{\mathrm{d}u}{\mathrm{d} \tau}=v,\\
&\dfrac{\mathrm{d}v}{\mathrm{d} \tau}=-u + \frac{1}{n+1} u^{n+1}  + \epsilon (-\sqrt{c}) 
\left( u^n  + (-1+\frac{1}{c})     \right) v + O(\epsilon^2).\\
\end{split}\right.
\end{equation}

  Next, we  analyze whether the periodic orbit will continue to exist for the above system.
Note that the unperturbed Hamiltonian system (\ref{8}) can be described by  the Hamiltonian function $H.$
Fix an initial data $(u_0,0)$, $0<u_0< \sqrt[n]{n+1}.$ Let $(u(\tau),v(\tau))$ be the solution of (\ref{15})
with $(u(0),v(0))=(u_0,0).$ Then there exist $\tau_1>0$ and  $\tau_2<0$ such that
$$v(\tau)>0  ~~ \mathrm{for} ~ \tau\in (0,\tau_1),~~~~v(\tau_1)=0$$
and
$$v(\tau)<0  ~~ \mathrm{for} ~ \tau\in (\tau_2,0),~~~~v(\tau_2)=0.$$
Define a function $\Psi$ by
\begin{equation}\label{z-1-7}
\Psi(u_0,c,\epsilon) =   \int_{\tau_2}^{\tau_1}   \dot{H} (u,v) \mathrm{d} \tau,
\end{equation}
where the dot $``."$ represents the derivative by $\tau,$ 
$H(u,v)$ is defined as (\ref{9}),
and
\begin{equation}\label{z-1-8}
 \dot{H} (u,v)=  \epsilon \sqrt{c} \left(   (1-u^n) v^2 -\frac{1}{c} v^2        \right) + \mathrm{O} (\epsilon^2).   
 \end{equation}
By  Newton-Leibniz formula, we have 
\begin{equation}\label{z-1-9}
\Psi(u_0,c,\epsilon) =   H (u(\tau_1),v(\tau_1)) -   H (u(\tau_2),v(\tau_2)). 
\end{equation}
Thus, $u(\tau)$ is a periodic solution of (\ref{15}) if and only if $\Psi(u_0,c,\epsilon)=0.$
Since $\Psi(u_0,c,0)=0,$ one can set $$\Psi(u_0,c,\epsilon)= \epsilon \hat{\Psi}(u_0,c,\epsilon).$$
By using (\ref{z-1-7}) and  (\ref{z-1-8}), we obtain 
\begin{equation}\label{z-w-1}
\begin{split}
 \lim_{\epsilon \rightarrow 0} \hat{\Psi}(u_0,c,\epsilon) &=   \sqrt{c} \int_{\tau_2}^{\tau_1} 
  \left(   (1-u_1^n) v_1^2 -\frac{1}{c} v_1^2        \right) \mathrm{d} \tau \\
  & := \hat{\Psi}(u_0,c), 
  \end{split}
\end{equation}
where $(u_1,v_1)$ is a solution of  system (\ref{8}) and this integral is 
performed on a level curve $H=H(u_0,0) \in   (0,d_n).$ 

Since
 \begin{equation}\label{z-1-10}
   \int_{\tau_2}^{\tau_1}   u_1^n v_1^2     \mathrm{d} \tau 
  =  \int_{\tau_2}^{\tau_1}   u_1^n {u_1^\prime}^2     \mathrm{d} \tau 
   =  \int_{\tau_2}^{\tau_1}   u_1^n {u_1^\prime}     \mathrm{d} u_1
     = - n \int_{\tau_2}^{\tau_1}   u_1^n {v_1}^2     \mathrm{d} \tau 
     -  \int_{\tau_2}^{\tau_1}   u_1^{n+1} u_1^{\prime\prime} \mathrm{d} \tau, 
\end{equation}
we have 
\begin{equation}\label{z-1-11}
   \int_{\tau_2}^{\tau_1}   u_1^n v_1^2     \mathrm{d} \tau 
     = - \frac{1}{n+1}   \int_{\tau_2}^{\tau_1}   u_1^{n+1} u_1^{\prime\prime} \mathrm{d} \tau. 
\end{equation}
When $n=0$, it yields  that
\begin{equation}\label{z-1-12}
   \int_{\tau_2}^{\tau_1}   v_1^2     \mathrm{d} \tau 
     = -   \int_{\tau_2}^{\tau_1}   u_1 u_1^{\prime\prime} \mathrm{d} \tau. 
\end{equation}
By (\ref{z-1-11}) and  (\ref{z-1-12}), we have
\begin{equation}\label{z-1-13}
   \int_{\tau_2}^{\tau_1}   (1-u_1^n)  v_1^2     \mathrm{d} \tau 
     =    \int_{\tau_2}^{\tau_1} ( - u_1 +  \frac{1}{n+1} u_1^{n+1} ) u_1^{\prime\prime} \mathrm{d} \tau
     =  \int_{\tau_2}^{\tau_1}  { u_1^{\prime\prime}  }^2   \mathrm{d} \tau. 
\end{equation}
Thus, we obtain 
\begin{equation}\label{z-w-2}
 \hat{\Psi}(u_0,c)=  
  \sqrt{c} \int_{\tau_2}^{\tau_1} 
  \left(  { u_1^{\prime\prime}  }^2  -\frac{1}{c} {u_1^\prime}^2        \right) \mathrm{d} \tau
  =  \frac{1 }{\sqrt{c}}  \left( c \int_{\tau_2}^{\tau_1}  { u_1^{\prime\prime}  }^2 \mathrm{d} \tau 
        - \int_{\tau_2}^{\tau_1}  {u_1^\prime}^2  \mathrm{d} \tau   \right),
\end{equation}
and the limit wave speed $c_0$ satisfies 
 \begin{equation}\label{z-w-3}
  c_0 \int_{\tau_2}^{\tau_1}  { u_1^{\prime\prime}  }^2 \mathrm{d} \tau 
        - \int_{\tau_2}^{\tau_1}  {u_1^\prime}^2  \mathrm{d} \tau  =0.
\end{equation}

\section{ Analysis by  Abelian integral and  involution    }\label{szeng3}

We first give some lemmas associated with algebraic operation,  Abelian integral and involution.

\begin{lemma}[\cite{Patra2024}]      \label{Le4}
Suppose that  $u$ and $\eta$ satisfy the algebraic equation
$$u^2 - \eta^2=\frac{2}{(n+1)(n+2)} (u^{n+2}-\eta^{n+2}),$$ 
where $n>0$ is an integer.
 Then 
  \begin{equation}\label{}
 \left( \sum_{k=1}^{n} k u^{n-k} \eta^{k-1}  \right) (  \frac{u^{n+1}}{n+1} -  u  ) 
   +  \left( \sum_{k=1}^{n} k \eta^{n-k} u^{k-1}  \right) ( \frac{\eta^{n+1}}{n+1}  -   \eta  )
   = \frac{n}{(n+1)(n+2)} \sum_{k=0}^{[\frac{n-1}{2}]} (u^{n-2 k} - \eta^{n-2 k}  )^2 (u \eta)^{2 k}.
   \end{equation}
\end{lemma}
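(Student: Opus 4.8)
My plan is to reduce both sides to polynomials in the symmetric functions of $u$ and $\eta$ and match them. Throughout I assume $u\ne\eta$ (the case of two distinct turning points), and write $s=u+\eta$, $q=u\eta$, $p_m=u^m+\eta^m$, $\sigma_m=\frac{u^{m+1}-\eta^{m+1}}{u-\eta}=\sum_{j=0}^{m}u^{m-j}\eta^{j}$, and abbreviate the two bracketed sums and the two factors of the left side as $A=\sum_{k=1}^{n}ku^{n-k}\eta^{k-1}$, $B=\sum_{k=1}^{n}k\eta^{n-k}u^{k-1}$, $P=\frac{u^{n+1}}{n+1}-u$, $Q=\frac{\eta^{n+1}}{n+1}-\eta$, so the statement reads $AP+BQ=\frac{n}{(n+1)(n+2)}\sum_{k=0}^{[\frac{n-1}{2}]}(u^{n-2k}-\eta^{n-2k})^2(u\eta)^{2k}$. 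Reading off the coefficients (note $A=\partial_\eta\sigma_n$, $B=\partial_u\sigma_n$) one gets $A+B=(n+1)\sigma_{n-1}$ and, by one line of telescoping, $(A-B)(u-\eta)=2\sigma_n-(n+1)p_n$; likewise $P+Q=\frac{p_{n+1}}{n+1}-s$ and $P-Q=(u-\eta)\big(\frac{\sigma_n}{n+1}-1\big)$. Then the polarization identity $2(AP+BQ)=(A+B)(P+Q)+(A-B)(P-Q)$ expresses the left side as an explicit combination of $\sigma_{n-1}p_{n+1}$, $\sigma_{n-1}s$, $\sigma_n^2$, $p_n\sigma_n$, $\sigma_n$ and $p_n$.

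Since $u\ne\eta$, dividing the hypothesis by $u-\eta$ gives the usable relation $(n+1)(n+2)s=2\sigma_{n+1}$ (for even $n$ one may divide once more by $u+\eta$, which only streamlines the bookkeeping). Substituting $s=\frac{2}{(n+1)(n+2)}\sigma_{n+1}$ for the single occurrence of $s$, and feeding in the elementary product identities $\sigma_{n-1}p_{n+1}=\sigma_{2n}-q^n$, $\sigma_np_n=\sigma_{2n}+q^n$ and $\sigma_{n-1}\sigma_{n+1}=\sigma_n^2-q^n$ — each a one-line consequence of $\sigma_m=\frac{u^{m+1}-\eta^{m+1}}{u-\eta}$ — collapses the left side to $\frac{\sigma_n^2}{(n+1)(n+2)}-\frac{(n+1)q^n}{n+2}-\sigma_n+\frac{n+1}{2}p_n$, modulo the hypothesis. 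For the right side, expanding $(u^{n-2k}-\eta^{n-2k})^2(u\eta)^{2k}=u^{2n-2k}\eta^{2k}+u^{2k}\eta^{2n-2k}-2(u\eta)^n$ and summing (splitting into $n$ even / $n$ odd, or using the recursion $S_n=(u-\eta)^2\sigma_{n-1}^2+q^2S_{n-2}$ for the sum $S_n$) yields the closed form $S_n=\tau_n-(n+1)q^n$ with $\tau_n=\sum_{a+b=n}u^{2a}\eta^{2b}$. After clearing the common factor, the lemma reduces to the single scalar identity
\[
\sigma_n^2-n\tau_n-(n+1)q^n-(n+1)(n+2)\sigma_n+\tfrac12(n+1)^2(n+2)\,p_n\equiv0
\]
modulo the hypothesis.

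To finish this last identity I would apply the relation $(n+1)(n+2)s=2\sigma_{n+1}$ a second time: first eliminate $\sigma_n^2$ and $\tau_n$ by $\sigma_n^2=\sigma_{2n}+q\sigma_{n-1}^2$ and $\sigma_{2n}=\tau_n+q\tau_{n-1}$, then use the three-term recurrences $\sigma_{m+1}=s\sigma_m-q\sigma_{m-1}$ and $p_{m+1}=sp_m-qp_{m-1}$ to rewrite what remains so that the factor $(n+1)(n+2)s-2\sigma_{n+1}$ pulls out; equivalently, after clearing denominators one exhibits the cofactor of $s-\frac{2}{(n+1)(n+2)}\sigma_{n+1}$ explicitly (for $n=1$ it is the constant $-1$; in general one checks the pattern, or organises it as a downward induction on $n$ via the recursion for $S_n$). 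This cancellation is the only genuinely delicate point — one must invoke the hypothesis twice rather than once, keep the low-degree remainders $\sigma_n,p_n$ under control, and make the verification uniform in $n$ — while everything preceding it is routine symmetric-function algebra. The two parities of $n$ are treated in parallel, differing only in which factor of the hypothesis ($u-\eta$, or $u-\eta$ and then $u+\eta$) one cancels at the outset.
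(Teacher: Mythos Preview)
The paper does not prove this lemma --- it is quoted from \cite{Patra2024} and then invoked in Proposition~\ref{theorem-1} --- so there is no in-paper argument to compare against. I therefore comment on the proposal itself.

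Your symmetric-function reduction is correct and well organised: the polarization $2(AP+BQ)=(A+B)(P+Q)+(A-B)(P-Q)$, the product rules $\sigma_{n-1}p_{n+1}=\sigma_{2n}-q^n$, $\sigma_np_n=\sigma_{2n}+q^n$, $\sigma_{n-1}\sigma_{n+1}=\sigma_n^2-q^n$, the substitution $s=\tfrac{2}{(n+1)(n+2)}\sigma_{n+1}$, and the closed form $S_n=\tau_n-(n+1)q^n$ all check, and both sides of the lemma do collapse to your displayed scalar relation. The genuine gap is the last paragraph: you never actually derive that relation from the hypothesis, and the sketch (``apply the relation a second time \ldots\ check the pattern, or organise it as a downward induction'') does none of the work. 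Worse, the step cannot be completed, because the lemma as stated is \emph{false} on part of the constraint variety. For even $n$ the hypothesis is satisfied identically along $\eta=-u$; with $n=2$, $u=1$, $\eta=-1$ one gets $AP+BQ=\tfrac{4}{3}$ on the left and $0$ on the right, and your reduced expression becomes $-8u^2(u^2-3)\ne 0$. The identity \emph{does} hold on the component obtained after cancelling $u^2-\eta^2$ from the hypothesis (the branch relevant to \cite{Patra2024} and to odd $n$), so the cure is to add that restriction; but without it your final step has no chance. Incidentally, the paper's own use of the lemma in the even-$n$ case --- where the involution is exactly $\eta(u)=-u$ --- inherits this defect, although the desired conclusion $T_n'(u)>0$ is immediate there from $T_n(u)=u^n$.
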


\begin{lemma}[\cite{Patra2024,WeiM2025}]      \label{Le2}
Consider the system
 \begin{equation}\label{19}
 \left\{\begin{split}
&\dfrac{\mathrm{d}u}{\mathrm{d} \tau}=v,\\
&\dfrac{\mathrm{d}v}{\mathrm{d} \tau}=\epsilon   \left( b_0 +b_n q_n(u)  \right) v - L^\prime (u),
\end{split}\right.
\end{equation}
 where  $\epsilon>0$ is a small parameter, $b_0$ and $b_n$ are real numbers, $q_n(u)$ is a polynomial.
 
Assume that:

(1) The Hamiltonian function of the unperturbed system of  (\ref{19}) is in  the form of
 $H(u, v) = \frac{v^2}{2} + L(u)$ and satisfies $L^\prime (u) (u- u^*)>0$ (or <0)
for $ u\in(u_2,u^*) \bigcup (u^*,u_3) $, where $(u^*,0)$ is a center of the unperturbed system of (\ref{19}),
  $L(u_2) = L(u_3).$
 
(2)  Let $ \tilde{B}_n(h)=\frac{ \oint_{\Gamma_h} q_n(u) v  ~\mathrm{d} u}{ \oint_{\Gamma_h}  v   ~\mathrm{d} u}$ and
 $ T_n(u)=(n+1) \frac{ \int_{\eta (u)}^{u}  q_n(t)   ~\mathrm{d} t}{  \int_{\eta (u)}^{u}   ~\mathrm{d} t},$
where $\Gamma_h$ is a periodic annulus of the unperturbed system of  (\ref{19}), $h \in (h_2,h_3)$ and $\eta (u)$ is an 
involution defined on interval  $(u_2,u_3).$

Then $ T_n^\prime(u) >0 $ for  $u\in(u^*,u_3)$ implies that $ \tilde{B}_n^\prime(h) >0 $ for  $h \in (h_2,h_3).$

\end{lemma}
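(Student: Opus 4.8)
\emph{Proof idea.} The plan is to reduce the sign of $\tilde{B}_n^{\prime}(h)$ to a single Abelian--integral quantity, to transport the hypothesis $T_n^{\prime}>0$ through the involution $\eta$, and to finish with a covariance inequality on the level curves. Throughout set $v=\sqrt{2(h-L(u))}$ on the upper arc, write $\alpha<u^{*}<\beta=\beta(h)$ for the two turning points of $\Gamma_h$ (so $L(\alpha)=L(\beta)=h$ and $\alpha=\eta(\beta)$), and put $I_0(h)=\oint_{\Gamma_h} v\,\mathrm{d}u>0$, $I_n(h)=\oint_{\Gamma_h} q_n(u)\,v\,\mathrm{d}u$, so that $\tilde{B}_n=I_n/I_0$. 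Differentiating under the integral sign gives $\frac{\mathrm d}{\mathrm dh}\oint_{\Gamma_h} A\,v\,\mathrm du=\oint_{\Gamma_h}\frac{A}{v}\,\mathrm du$ (the boundary terms vanish because $Av\to0$ at $\alpha,\beta$). Hence $\operatorname{sgn}\tilde{B}_n^{\prime}=\operatorname{sgn}\bigl(I_n^{\prime}I_0-I_nI_0^{\prime}\bigr)$, and it suffices to prove $I_n^{\prime}I_0-I_nI_0^{\prime}>0$ for $h\in(h_2,h_3)$.

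The next step is to symmetrize and then fold. Using $v_i^2=2(h-L(u_i))$ one obtains
\begin{equation*}
I_n^{\prime}I_0-I_nI_0^{\prime}=\oint_{\Gamma_h}\!\oint_{\Gamma_h}\frac{[q_n(u_1)-q_n(u_2)]\,[L(u_1)-L(u_2)]}{v_1v_2}\,\mathrm du_1\,\mathrm du_2 .
\end{equation*}
Here $L$ is not monotone on $(u_2,u_3)$, so the integrand is sign--indefinite; this is precisely what the involution repairs. Folding the $u$--range onto $(u^{*},\beta)$ by $u\mapsto\eta(u)$ (using $L\circ\eta=L$ and $v\circ\eta=v$) collects the four resulting pieces into the two densities $P_n^{\prime}(u)=q_n(u)-q_n(\eta(u))\eta^{\prime}(u)$ and $D^{\prime}(u)=1-\eta^{\prime}(u)$, where $P_n(u)=\int_{\eta(u)}^{u}q_n$ and $D(u)=u-\eta(u)$; an integration by parts free of boundary contributions (because $P_n(u^{*})=D(u^{*})=0$ and $v(\beta)=0$) then gives the representation
\begin{equation*}
\tilde{B}_n(h)=\frac{1}{n+1}\,\frac{\displaystyle\int_{u^{*}}^{\beta}T_n(u)\,D(u)\,\frac{L^{\prime}(u)}{v}\,\mathrm du}{\displaystyle\int_{u^{*}}^{\beta}D(u)\,\frac{L^{\prime}(u)}{v}\,\mathrm du},
\end{equation*}
i.e.\ $\tilde{B}_n$ is $\tfrac{1}{n+1}$ times the average of $T_n$ against a positive measure.

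Because $\tfrac{\mathrm d}{\mathrm d\ell}\bigl(u_+(\ell)-\eta(u_+(\ell))\bigr)>0$ on the right branch (as the energy $\ell=L$ grows the right branch moves outward while its $\eta$--image moves inward), the quantity $D$ is a legitimate coordinate on $(u^{*},\beta)$. In it the bilinear form above collapses to $I_n^{\prime}I_0-I_nI_0^{\prime}=\text{(positive const)}\cdot\mathrm{Cov}_{\mathrm dD/v}\bigl(L,\tfrac{\mathrm dP_n}{\mathrm dD}\bigr)$, a covariance under the positive measure $\mathrm dD/v$, with $L$ increasing in $D$. The hypothesis $T_n^{\prime}>0$ is exactly $\Theta:=D\,\tfrac{\mathrm dP_n}{\mathrm dD}-P_n=D^{2}\tfrac{\mathrm d}{\mathrm dD}(P_n/D)\ge0$, i.e.\ $P_n/D$ nondecreasing.

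The hard part is the covariance inequality itself, and the subtlety is that $\tfrac{\mathrm dP_n}{\mathrm dD}$ need \emph{not} be monotone --- only its running average $P_n/D$ is --- so plain comonotonicity (Chebyshev) does not apply. Writing $\mathrm{Cov}_{\mathrm dD/v}$ as $\tfrac12\iint(L_1-L_2)\bigl(\tfrac{\mathrm dP_n}{\mathrm dD}(D_1)-\tfrac{\mathrm dP_n}{\mathrm dD}(D_2)\bigr)\tfrac{\mathrm dD_1\mathrm dD_2}{v_1v_2}$ and integrating twice by parts (first through $\tfrac{\mathrm d^2P_n}{\mathrm dD^2}=\Theta^{\prime}/D$, then through $\Theta^{\prime}$) reduces the whole statement, with all boundary terms vanishing, to
\begin{equation*}
I_n^{\prime}I_0-I_nI_0^{\prime}=\text{(positive const)}\cdot\Bigl(-\!\int_{0}^{D_{\max}}\Theta(s)\,\Bigl(\tfrac{\Omega(s)}{s}\Bigr)^{\prime}\mathrm ds\Bigr),\qquad \Omega(s)=\iint_{D_2<s<D_1}\!\!\frac{L_1-L_2}{v_1v_2}\,\mathrm dD_1\,\mathrm dD_2>0 .
\end{equation*}
Since $\Theta\ge0$ is the hypothesis, everything comes down to the single geometric inequality $(\Omega/s)^{\prime}\le0$, i.e.\ $\Omega(s)/s$ nonincreasing, which involves only the unperturbed potential $L$ and the turning points, not $q_n$. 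I expect this to be the main obstacle: $\Omega$ is itself unimodal (increasing then decreasing), so the bound $\Omega^{\prime}\le\Omega/s$ is delicate near the center and is where assumption (1) on $L$ must be used in full. I note that the algebraic identity of Lemma~\ref{Le4} is not needed for Lemma~\ref{Le2}; it serves only later, to verify $T_n^{\prime}>0$ for the concrete weight.
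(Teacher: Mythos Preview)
The paper does not supply its own proof of this lemma: it is quoted from \cite{Patra2024,WeiM2025} and used as a black box in Section~\ref{szeng3}. So there is no in--paper argument to compare your attempt against; the remarks below concern the internal soundness of your outline.

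Your first reduction is correct and is the natural opening move. Folding by the involution and integrating by parts (the boundary terms vanish because $P_n(u^{*})=D(u^{*})=0$ and $v(\beta)=0$) does give
\[
(n+1)\,\tilde B_n(h)=\frac{\displaystyle\int_{u^{*}}^{\beta(h)}T_n(u)\,D(u)\,\frac{L'(u)}{v}\,\mathrm du}{\displaystyle\int_{u^{*}}^{\beta(h)}D(u)\,\frac{L'(u)}{v}\,\mathrm du},
\]
so that $(n+1)\tilde B_n(h)$ is a weighted average of $T_n$ against a positive measure on $(u^{*},\beta(h))$; likewise your observation that the hypothesis $T_n'>0$ is equivalent to $P_n/D$ being strictly increasing (your $\Theta\ge 0$) is right.

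The gap is in what follows. You pass to a covariance in the $D$--variable, integrate by parts twice, and reduce everything to the geometric inequality $(\Omega/s)'\le 0$ for the auxiliary function $\Omega(s)=\iint_{D_2<s<D_1}(L_1-L_2)/(v_1v_2)\,\mathrm dD_1\mathrm dD_2$. You then stop, flagging this as ``the main obstacle.'' That is a genuine hole, not a routine estimate: nothing in your sketch uses assumption~(1) to control $\Omega$, you have not verified that the two integrations by parts leave no boundary contributions (near $D_{\max}$ the factor $1/v_1$ blows up while $P_n$ does not vanish, so several of the intermediate expressions are individually divergent and need to be handled carefully), and the residual inequality you land on is at least as delicate as the statement you set out to prove. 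In effect you have exchanged one monotonicity problem for another.

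A more economical way to finish, closer to what the cited references do, is to stay with the averaged representation above and pass to the energy variable $\ell=L(u)$ on the right branch. Both $I_n$ and $I_0$ then become half--order Abel integrals with the common kernel $(h-\ell)^{-1/2}$ of the functions $\bar P_n(\ell)=P_n(u_+(\ell))$ and $\bar D(\ell)=D(u_+(\ell))$, which satisfy $\bar P_n(h_0)=\bar D(h_0)=0$, $\bar D>0$, $\bar D'>0$, and $\bar P_n/\bar D$ strictly increasing. The monotonicity of $I_n/I_0$ then follows from a sign--regularity (variation--diminishing) property of this kernel rather than from a Chebyshev bound on a non--monotone integrand; this avoids the $\Omega$--detour entirely. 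If you do not want to redevelop that machinery, the cleanest option is simply to cite \cite{Patra2024,WeiM2025} for the lemma, as the paper does.
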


\begin{lemma}[\cite{Christopher2007}]      \label{Le3}
Suppose that the Abelian integral $I(h)$ of system (\ref{19})  satisfies   $I(h) \not\equiv 0$ for  $h \in (h_2,h_3)$.
If  $X_{H,\epsilon}$ has a limit cycle bifurcating from $\Gamma_{\tilde{h}}$, then $I(\tilde{h})=0$.
 If there exists $h_* \in (h_2,h_3)$ such that $I(h_*)=0$ 
 and $I^\prime (h_*) \neq 0$, then $X_{H,\epsilon}$
has a unique limit cycle bifurcating from  $\Gamma_{h_*}$.
\end{lemma}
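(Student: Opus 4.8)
The plan is to reduce the existence and uniqueness of bifurcating limit cycles to the study of the zeros of a scalar displacement function built from a Poincar\'e return map, and to identify its first-order term with the Abelian integral $I(h)$. First I would fix a transversal arc $\Sigma$ crossing the period annulus $\{\Gamma_h : h\in(h_2,h_3)\}$, parametrized so that the point $\sigma(h)\in\Sigma$ lies on $\Gamma_h$ and $H$ restricted to $\Sigma$ is a diffeomorphism onto $(h_2,h_3)$. Since $(u^*,0)$ is a center of the unperturbed field $X_H$ and $L'(u)(u-u^*)$ keeps a fixed sign on $(u_2,u^*)\cup(u^*,u_3)$, the level curves $\Gamma_h$ are genuine ovals and such a section exists. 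For $\epsilon$ small the trajectory of $X_{H,\epsilon}$ issuing from $\sigma(h)$ returns to $\Sigma$, and I would measure displacement in terms of energy,
\begin{equation*}
 d(h,\epsilon)=\int_{\gamma(h,\epsilon)}\mathrm{d}H,
\end{equation*}
where $\gamma(h,\epsilon)$ is the trajectory arc from $\sigma(h)$ to its first return. By smooth dependence of solutions on the initial data and on $\epsilon$, the function $d$ is $C^\infty$ near $(h_2,h_3)\times\{0\}$, and $d(h,0)\equiv0$ because the unperturbed orbits close up.

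The central computation is the first variation $\partial_\epsilon d(h,0)=I(h)$. Along any trajectory of (\ref{19}) one has $\dot H=H_u\dot u+H_v\dot v=\epsilon(b_0+b_nq_n(u))v^2$, so that $d(h,\epsilon)=\epsilon\oint(b_0+b_nq_n(u))v^2\,\mathrm{d}\tau$ along $\gamma(h,\epsilon)$. Replacing the perturbed arc by the unperturbed oval $\Gamma_h$ (an error of order $\epsilon^2$) and using $v\,\mathrm{d}\tau=\mathrm{d}u$, I obtain
\begin{equation*}
 d(h,\epsilon)=\epsilon\,I(h)+O(\epsilon^2),\qquad I(h)=\oint_{\Gamma_h}(b_0+b_nq_n(u))\,v\,\mathrm{d}u.
\end{equation*}
Since $d(h,0)\equiv0$, Hadamard's lemma lets me factor $d(h,\epsilon)=\epsilon\,\tilde{d}(h,\epsilon)$ with $\tilde{d}$ smooth and $\tilde{d}(h,0)=I(h)$; for $\epsilon\neq0$ the zeros of $d(\cdot,\epsilon)$ coincide with those of $\tilde{d}(\cdot,\epsilon)$, and a limit cycle lying near $\Gamma_h$ corresponds exactly to such a zero, i.e.\ to a fixed point of the return map.

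For the necessity claim I would argue by continuity: if a limit cycle bifurcates from $\Gamma_{\tilde{h}}$, there exist $\epsilon_k\to0$ and $h_k\to\tilde{h}$ with $\tilde{d}(h_k,\epsilon_k)=0$, and letting $k\to\infty$ yields $I(\tilde{h})=\tilde{d}(\tilde{h},0)=0$. For the sufficiency claim the simple-zero hypothesis reads $\tilde{d}(h_*,0)=I(h_*)=0$ and $\partial_h\tilde{d}(h_*,0)=I'(h_*)\neq0$, so the implicit function theorem produces a unique smooth branch $h=\varphi(\epsilon)$ with $\varphi(0)=h_*$ solving $\tilde{d}(\varphi(\epsilon),\epsilon)=0$, and these are the only zeros of $\tilde{d}(\cdot,\epsilon)$ near $h_*$. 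Because $\partial_h\tilde{d}\neq0$ along the branch, the corresponding periodic orbit is a hyperbolic, hence isolated, limit cycle, giving exactly one cycle bifurcating from $\Gamma_{h_*}$. The standing hypothesis $I\not\equiv0$ serves to guarantee that $I$ is the genuine leading coefficient governing the splitting, so no higher-order Melnikov analysis is needed.

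The hard part will be the rigorous justification of the expansion, namely that the remainder in $d(h,\epsilon)=\epsilon I(h)+O(\epsilon^2)$ is smooth and uniform in $h$ on compact subintervals; this rests on constructing the return map with a well-defined first-return time for all small $\epsilon$ and on the smooth dependence of the flow on $(h,\epsilon)$. Once this expansion and the Hadamard factorization $d=\epsilon\tilde{d}$ are secured, both conclusions reduce to elementary continuity and the implicit function theorem.
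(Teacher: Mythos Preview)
The paper does not prove this lemma at all: it is quoted verbatim from the reference \cite{Christopher2007} and used as a black box in the proof of Proposition~\ref{theorem-2}. Your proposal is therefore not a comparison target but rather a reconstruction of the classical Poincar\'e--Pontryagin argument that underlies the cited result, and as such it is correct in outline. The computation $\dot H=\epsilon(b_0+b_nq_n(u))v^2$ is right for system~(\ref{19}), the identification of the leading term of the displacement with $I(h)$ is the standard Melnikov expansion, and the Hadamard factorization $d=\epsilon\tilde d$ followed by continuity (for necessity) and the implicit function theorem (for sufficiency at a simple zero) is exactly how the proof goes in the literature. The only caveat is that your discussion of the role of $I\not\equiv0$ is slightly off: that hypothesis is not needed for the expansion $d=\epsilon I(h)+O(\epsilon^2)$ itself, but rather to ensure that the zero set of $d(\cdot,\epsilon)$ is governed at first order by $I$ and that one is not in the degenerate case requiring higher Melnikov functions; since the lemma only asserts bifurcation from simple zeros of $I$, this hypothesis guarantees the statement is nonvacuous and that no spurious limit cycles arise elsewhere from higher-order terms.
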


 The linear perturbation  of system (\ref{15}) can be derived as
 \begin{equation}\label{16}
 \left\{\begin{split}
&\dfrac{\mathrm{d}u}{\mathrm{d} \tau}=v,\\
&\dfrac{\mathrm{d}v}{\mathrm{d} \tau}=-u + \frac{1}{n+1} u^{n+1}  + \epsilon (-\sqrt{c}) 
\left( u^n  + (-1+\frac{1}{c})     \right) v.\\
\end{split}\right.
\end{equation}
Obviously, the Abelian integral of
system  (\ref{16}) is
\begin{equation}\label{17}
\begin{split}
I(h)&=\oint_{\Gamma_h}  (-\sqrt{c}) 
\left( u^n  + (-1+\frac{1}{c})     \right) v ~\mathrm{d} u\\
& =    (-\sqrt{c})  (-1+\frac{1}{c})   \oint_{\Gamma_h}   v ~\mathrm{d} u  
 +   (-\sqrt{c})   \oint_{\Gamma_h}  u^n  v   ~\mathrm{d} u  \\
  & =    \sqrt{c}   B_0(h)  \left(       (1-\frac{1}{c}) - \frac{ B_n(h)}{B_0(h)}          \right), \\
\end{split}
\end{equation}
where 
\begin{equation}\label{bn-b0}
 B_n(h) = \oint_{\Gamma_h}  u^n  v~\mathrm{d} u,~~~~~~~~B_0(h) = \oint_{\Gamma_h}   v~\mathrm{d} u.
\end{equation}
Let  $$\tilde{B}_n(h)= \frac{B_n(h)}{B_0(h)}.$$
Then we  give two propositions, which will exhibit some properties of  $\tilde{B}_n(h).$

 \begin{proposition}\label{theorem-1}
The function $ \tilde{B}_n(h)$ satisfies   $ \tilde{B}_n^\prime(h) > 0 $ 
 for $ h \in (0,d_n) $ and $\lim\limits_{h \rightarrow 0} \tilde{B}_n(h)=0. $
\end{proposition}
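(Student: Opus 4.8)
The plan is to reduce the monotonicity of $\tilde{B}_n(h)=B_n(h)/B_0(h)$ to a pointwise inequality through Lemma \ref{Le2}, to establish that inequality by differentiating the associated involution function $T_n$ and invoking the algebraic identity of Lemma \ref{Le4}, and to obtain the boundary value $\tilde{B}_n(h)\to 0$ by writing $\tilde{B}_n$ as a weighted average of $u^n$ over a shrinking orbit.

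First I would set up the involution. Here $L=W$ and the center is $u^*=0$; since $W'(u)=u\bigl(1-\tfrac{u^n}{n+1}\bigr)$ is strictly negative on $(u_2,0)$ and strictly positive on $(0,u_3)$, where $u_3=\sqrt[n]{n+1}$ and $u_2<0$ is the point with $W(u_2)=W(u_3)=d_n$, the potential $W$ is strictly monotone on each side of $0$, so there is a smooth involution $\eta\colon(u_2,u_3)\to(u_2,u_3)$ with $W(\eta(u))=W(u)$, $\eta(0)=0$, and $\eta(u)<0<u$ for $u\in(0,u_3)$; differentiating $W(u)=W(\eta(u))$ gives $\eta'(u)=W'(u)/W'(\eta(u))$. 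The sign hypothesis (1) of Lemma \ref{Le2} holds because $W'(u)\,u>0$ on $(u_2,0)\cup(0,u_3)$, and with $q_n(u)=u^n$ one has
\begin{equation*}
T_n(u)=(n+1)\,\frac{\int_{\eta(u)}^{u}t^n\,\mathrm{d}t}{\int_{\eta(u)}^{u}\mathrm{d}t}=\frac{u^{n+1}-\eta(u)^{n+1}}{u-\eta(u)}.
\end{equation*}
By Lemma \ref{Le2} it then suffices to prove $T_n'(u)>0$ for $u\in(0,u_3)$, which gives $\tilde{B}_n'(h)>0$ for $h\in(0,d_n)$.

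To get the pointwise inequality, set $P(u,\eta)=\dfrac{u^{n+1}-\eta^{n+1}}{u-\eta}=\sum_{k=0}^{n}u^{n-k}\eta^{k}$, so that by symmetry $\partial_\eta P=\sum_{k=1}^{n}k\,u^{n-k}\eta^{k-1}$ and $\partial_u P=\sum_{k=1}^{n}k\,\eta^{n-k}u^{k-1}$. Using $\eta'(u)=W'(u)/W'(\eta)$ and multiplying through by $W'(\eta(u))$,
\begin{equation*}
W'(\eta(u))\,T_n'(u)=W'(\eta)\,\partial_u P+W'(u)\,\partial_\eta P .
\end{equation*}
Since $\tfrac{u^{n+1}}{n+1}-u=-W'(u)$ and $\tfrac{\eta^{n+1}}{n+1}-\eta=-W'(\eta)$, the quantity $W'(\eta)\,\partial_u P+W'(u)\,\partial_\eta P$ is exactly $(-1)$ times the left member of the identity in Lemma \ref{Le4}, whose hypothesis $u^{2}-\eta^{2}=\tfrac{2}{(n+1)(n+2)}(u^{n+2}-\eta^{n+2})$ is precisely $W(u)=W(\eta)$. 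Hence, for $n$ odd,
\begin{equation*}
W'(\eta(u))\,T_n'(u)=-\,\frac{n}{(n+1)(n+2)}\sum_{k=0}^{[\frac{n-1}{2}]}\bigl(u^{n-2k}-\eta(u)^{n-2k}\bigr)^{2}\,\bigl(u\,\eta(u)\bigr)^{2k}.
\end{equation*}
For $u\in(0,u_3)$ we have $\eta(u)<0<u$, so each $\bigl(u\,\eta(u)\bigr)^{2k}>0$, and since $n-2k$ is odd, $\eta(u)^{n-2k}<0<u^{n-2k}$; thus the sum is strictly positive (its $k=0$ term equals $(u^{n}-\eta^{n})^{2}>0$), while $W'(\eta(u))<0$ because $\eta(u)\in(u_2,0)$. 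Therefore $T_n'(u)>0$ on $(0,u_3)$. When $n$ is even, $W$ is an even function, the orbit around $O$ is symmetric, so $\eta(u)=-u$ and $T_n(u)=\dfrac{u^{n+1}-(-u)^{n+1}}{2u}=u^{n}$, whence $T_n'(u)=n\,u^{n-1}>0$ directly. In both cases Lemma \ref{Le2} yields $\tilde{B}_n'(h)>0$ for $h\in(0,d_n)$.

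For the limit, parametrizing $\Gamma_h$ by $u\in[a(h),b(h)]$ (its $u$-intercepts) with $v=\pm\sqrt{2(h-W(u))}$ gives
\begin{equation*}
\tilde{B}_n(h)=\frac{\int_{a(h)}^{b(h)}u^{n}\sqrt{h-W(u)}\,\mathrm{d}u}{\int_{a(h)}^{b(h)}\sqrt{h-W(u)}\,\mathrm{d}u},
\end{equation*}
a weighted average of $u^n$ with positive weight, so $|\tilde{B}_n(h)|\le\bigl(\max_{u\in[a(h),b(h)]}|u|\bigr)^{n}$. As $h\to 0$ the loop $\Gamma_h$ collapses onto $O$, so $a(h),b(h)\to 0$ and hence $\tilde{B}_n(h)\to 0$. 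I expect the main obstacle to be the derivation of the displayed formula for $W'(\eta)\,T_n'$ and its exact matching with the identity of Lemma \ref{Le4}; once the sign pattern $\eta(u)<0<u$ on $(0,u_3)$ is in hand the positivity is immediate, but the even case genuinely needs a separate argument because there $\eta(u)=-u$ forces every difference $u^{n-2k}-\eta^{n-2k}$ (with $n-2k$ even) to vanish and the identity of Lemma \ref{Le4} degenerates.
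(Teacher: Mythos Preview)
Your argument is correct and follows the same blueprint as the paper: reduce to the monotonicity of $T_n$ via Lemma~\ref{Le2}, compute $W'(\eta)\,T_n'(u)$, and invoke the algebraic identity of Lemma~\ref{Le4}; the limit at $h\to 0$ is obtained by recognizing $\tilde B_n(h)$ as a weighted mean of $u^n$ over $\Gamma_h$, exactly as the paper does with its $J_n/J_0$ formulation.

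Where you diverge from the paper is in splitting by parity, and this is a genuine improvement. The paper declares ``without loss of generality'' that it treats $n$ even and then applies Lemma~\ref{Le4} to obtain the strict negativity of the numerator in \eqref{xs12}. But for even $n$ the potential $W$ is even, so the involution is $\eta(u)=-u$; every exponent $n-2k$ appearing in the sum on the right of Lemma~\ref{Le4} is then even and each factor $(u^{n-2k}-\eta^{n-2k})^{2}$ vanishes identically, so the right-hand side is $0$ and yields no sign information. (Indeed, with $\eta=-u$ the hypothesis $u^{2}-\eta^{2}=\tfrac{2}{(n+1)(n+2)}(u^{n+2}-\eta^{n+2})$ collapses to $0=0$ and no longer encodes the nontrivial level-set relation that the identity actually relies on.) Your direct computation $T_n(u)=u^{n}$, $T_n'(u)=nu^{n-1}>0$ for even $n$ is therefore not merely a shortcut but the correct way to close that case; Lemma~\ref{Le4} is only doing real work for odd $n$, where the involution is asymmetric, and your proof reflects that accurately.
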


  \begin{proof} 

 When $n$ is a positive even number, since $W(u)=\frac{  u^2  }{2} - \frac{ u^{n+2} }{(n+1)(n+2)},$  we have 
  $W^\prime(u)=u- \frac{ u^{n+1} }{n+1}=u(1-  \frac{ u^{n} }{n+1})<0$  for $-\sqrt[n]{n+1}<u<0,$ it follows that the function 
   $W(u): (-\sqrt[n]{n+1},0) \rightarrow (0,d_n)$ is
   strictly decreasing. Thus, there exists the inverse function 
     $\sigma_1:  (0,d_n) \rightarrow  (-\sqrt[n]{n+1},0)$  
     satisfies    
     $W(\sigma_1(h)) \equiv h$ for all $h \in  (0,d_n) .$
 Similarly,    
  when $0<u<\sqrt[n]{n+1},$ we obtain that
  $W^\prime(u)=u(1-  \frac{ u^{n} }{n+1})>0,$  the function 
   $W(u): (0,\sqrt[n]{n+1}) \rightarrow (0,d_n)$ is
   strictly increasing,  there exists the inverse function 
     $\sigma_2:  (0,d_n) \rightarrow  (0,\sqrt[n]{n+1})$  
     satisfies    
     $W(\sigma_2(h)) \equiv h$ for all $h \in  (0,d_n) .$
Since $W(-\sqrt[n]{n+1})=W(\sqrt[n]{n+1})$, $W(\sigma_1(h)) \equiv h \equiv  W(\sigma_2(h)) $
 for all $h \in  (0,d_n),$ we can construct an involution 
 $\eta(u)$
on the interval $(-\sqrt[n]{n+1},\sqrt[n]{n+1})$ as
 \begin{equation}\label{8-bu1}
 \eta(u)=\left\{\begin{split}
& ( \sigma_2 \circ W ) (u),~~~\mathrm{if}~ u\in(-\sqrt[n]{n+1},0);\\
& ( \sigma_1 \circ W ) (u),~~~\mathrm{if}~ u\in(0,\sqrt[n]{n+1});\\
& 0,~~~~~~~~~~~~~~~~~~~~\mathrm{if}~ u=0.\\
\end{split}\right.
\end{equation}
The graph of $\eta(u)$ is shown in Figure \ref{Fig-duihe}.
\begin{figure}[H]
\centering
\includegraphics[width=6in]{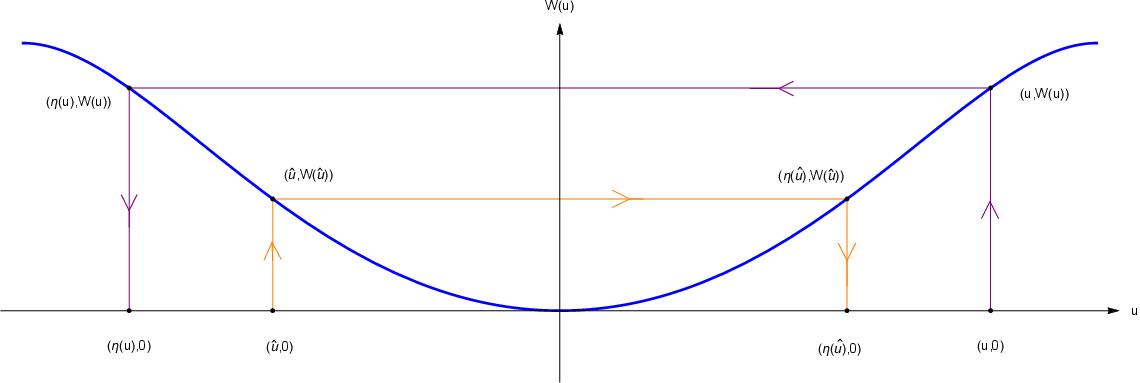}
\hspace{0.2in}
\caption{ The graph of image of $W$ and the involution $\eta$.  \label{Fig-duihe}}
\end{figure}

  When $n$ is an positive odd number,      
we obtain 
  $W^\prime(u)=u- \frac{ u^{n+1} }{n+1}=u(1-  \frac{ u^{n} }{n+1})>0$    for $0<u<\sqrt[n]{n+1},$  and the function 
   $W(u): (0,\sqrt[n]{n+1}) \rightarrow (0,d_n)$ is
   strictly increasing, and  there exists the inverse function 
     $\sigma_2:  (0,d_n) \rightarrow  (0,\sqrt[n]{n+1})$  
     satisfies    
     $W(\sigma_2(h)) \equiv h$ for all $h \in  (0,d_n) .$
Since $W(0)=0,$ 
  $\lim\limits_{u \rightarrow -\infty}  W(u) = +\infty$ and
  $W^\prime(u)=u(1-  \frac{ u^{n} }{n+1})<0$  for $u<0,$ 
   it follows that the function 
   $W(u): (-\infty,0) \rightarrow (0,+\infty)$ is
   strictly decreasing. Thus, there exists a constant $n_*<0$ with  $W(n_*)=W(\sqrt[n]{n+1})$ and 
     the inverse function 
     $\sigma_1:  (0,d_n) \rightarrow  (n_*,0)$  
    such that   
     $W(\sigma_1(h)) \equiv h$ for all $h \in  (0,d_n) .$
     Since $W(n_*)=W(\sqrt[n]{n+1})$, $W(\sigma_1(h)) \equiv h \equiv  W(\sigma_2(h)) $
 for all $h \in  (0,d_n),$ 
 we can construct an involution 
 $\eta(u)$
on the interval $(n_*,\sqrt[n]{n+1})$ as
 \begin{equation}\label{8-bu1}
 \eta(u)=\left\{\begin{split}
& ( \sigma_2 \circ W ) (u),~~~\mathrm{if}~ u\in(n_*,0);\\
& ( \sigma_1 \circ W ) (u),~~~\mathrm{if}~ u\in(0,\sqrt[n]{n+1});\\
& 0,~~~~~~~~~~~~~~~~~~~~\mathrm{if}~ u=0.\\
\end{split}\right.
\end{equation}

From the above we can see that for any integer $n>0$, the involution can be constructed.
 Without loss of generality, we  only discuss the case with $n$ is a positive even number.
   For the case with $n$ is a positive odd number,
  a similar discussion can be conducted, we omit it here.

Obviously, the involution  $\eta(u)$ satisfies the following   properties:\\
(i)~ $W(u)=W(\eta(u))$ for all $ u\in(-\sqrt[n]{n+1},\sqrt[n]{n+1}).$ That is
$$\frac{  u^2  }{2} - \frac{ u^{n+2} }{(n+1)(n+2)} =  \frac{  \eta^2  }{2} - \frac{ \eta^{n+2} }{(n+1)(n+2)},$$
which yields 
  $$  u^2 - \eta^2 =  \frac{ 2 }{(n+1)(n+2)} (  u^{n+2} - \eta^{n+2}  ),$$ where $\eta=\eta(u).$\\
 (ii) ~If $u\in(-\sqrt[n]{n+1},0),$ then $\eta(u)\in(0,\sqrt[n]{n+1}).$ \\ 
  (iii)~ If $u\in(0,\sqrt[n]{n+1}),$ then $\eta(u)\in(-\sqrt[n]{n+1},0).$ 
  
Taking 
$$ T_n(u)=(n+1) \frac{ \int_{\eta (u)}^{u}  t^n  ~\mathrm{d} t}{  \int_{\eta (u)}^{u}   ~\mathrm{d} t},$$
 by using Newton-Leibniz formula, it follows that $$ T_n(u)=\frac{u^{n+1} - \eta^{n+1}}{u- \eta}= \sum_{k=0}^{n} u^{n-k} \eta^k,$$
where $\eta=\eta(u).$

 By direct calculations and using the equation $\eta^\prime (u) = \frac{W^\prime(u)}{W^\prime(\eta )} $, we have 
     \begin{equation}\label{xs11}
      T_n^\prime (u)= \frac{\left( \sum_{k=1}^{n} k u^{n-k} \eta^{k-1}  \right) W^\prime(u)  
   +  \left( \sum_{k=1}^{n} k \eta^{n-k} u^{k-1}  \right) W^\prime(\eta)}{W^\prime(\eta)}.
   \end{equation}
 From Lemma \ref{Le4}, we have 
    \begin{equation}\label{xs12}
    \begin{split}
     \left( \sum_{k=1}^{n} k u^{n-k} \eta^{k-1}  \right) W^\prime(u)  
   +  \left( \sum_{k=1}^{n} k \eta^{n-k} u^{k-1}  \right) W^\prime(\eta)
   &= \left( \sum_{k=1}^{n} k u^{n-k} \eta^{k-1}  \right) ( u  - \frac{u^{n+1}}{n+1}   ) 
   +  \left( \sum_{k=1}^{n} k \eta^{n-k} u^{k-1}  \right) ( \eta  -   \frac{\eta^{n+1}}{n+1} )\\
& =  - \frac{n}{(n+1)(n+2)} \sum_{k=0}^{[\frac{n-1}{2}]} (u^{n-2 k} - \eta^{n-2 k}  )^2 (u \eta)^{2 k} <0. \\
   \end{split}
   \end{equation}
 Since  
  \begin{equation}\label{33}
 W^\prime(\eta)= \eta - \frac{\eta^{n+1}}{n+1}=\eta(1- \frac{\eta^n}{n+1})<0
\end{equation}
for   $-\sqrt[n]{n+1}<\eta(u)<0$, 
 by using Lemma \ref{Le4}, we have 
  $  T_n^\prime (u) > 0 $
for  $-\sqrt[n]{n+1}<\eta(u)<0< u< \sqrt[n]{n+1}$.
  From Lemma  \ref{Le2}, we get
   $ \tilde{B}_n^\prime(h) > 0 $ 
 for $ h \in (0,d_n). $
   The first part of the Proposition \ref{theorem-1}  is completed.

 When  $h \in (0,d_n),$ equation
 $\frac{  u^2  }{2} - \frac{ u^{n+2} }{(n+1)(n+2)}=h$  at least has two roots 
 $\pm\alpha(h)$  with  $\alpha(h)>0,$ and the two points
 $(\alpha(h),0)$ and $(-\alpha(h),0)$ 
 are the intersection points of the periodic orbit with the $u$-axis.
  Note that the orbit $(u(\tau),v(\tau))$ is on the level curve $H(u,v)=h$.


Since  
  \begin{equation}\label{bc3}
B_n(h)=   \oint_{\Gamma_h}  u^n  v   ~\mathrm{d} u =2  \int_{-\alpha(h)}^{\alpha(h)} 
 u^n \sqrt{-u^2+\frac{2}{(n+1)(n+2)}u^{n+2}+2h} ~ \mathrm{d} u := 2 J_n(h)
\end{equation}
and
   \begin{equation}\label{bc4}
\lim_{h \rightarrow 0}   \frac{J_n(h)}{J_0(h)}  =   \lim_{u \rightarrow 0} u^n =0,
\end{equation}
  we obtain that
  \begin{equation}\label{bc5}
   \lim_{h \rightarrow 0}   \tilde{B}_n(h) =  \lim_{h \rightarrow 0}   \frac{B_n(h)}{B_0(h)}   
     =   \lim_{h \rightarrow 0}   \frac{J_n(h)}{J_0(h)}  =0.
\end{equation}
  The second part of the Proposition \ref{theorem-1}  is completed. 
  
   \end{proof}

\begin{remark}
By  (\ref{bc3}), we have
 \begin{equation}\label{bc5-a1}
   \lim_{h \rightarrow d_n}   \tilde{B}_n(h) 
   =  \lim_{h \rightarrow d_n}   \frac{B_n(h)}{B_0(h)}   
     =   \lim_{h \rightarrow d_n}   \frac{J_n(h)}{J_0(h)}   
     = 
     \frac{\int_{0}^{\sqrt[n]{n+1}}  u^n \sqrt{-u^2+\frac{2}{(n+1)(n+2)}u^{n+2} + \frac{n(n+1)^{ \frac{2}{n} }   }{n+2}}
      ~ \mathrm{d} u }{   \int_{0}^{\sqrt[n]{n+1}}  \sqrt{-u^2+\frac{2}{(n+1)(n+2)}u^{n+2} + \frac{n(n+1)^{ \frac{2}{n} }   }{n+2}}
      ~ \mathrm{d} u     }.
  \end{equation}   
Since  the integral in equation   (\ref{bc5-a1}) cannot be calculated directly, 
  $ \lim\limits_{h \rightarrow d_n}  \tilde{B}_n(h)  $ is still an open problem.
However,  we can obtain the upper bound of  $ \tilde{B}_n(h) $  for some special positive even number $n$.
 For example, when $n=2,$
 \begin{equation}\label{bc5-a23}
   \lim_{h \rightarrow \frac{3}{4}  }   \tilde{B}_2(h) 
     =   \lim_{h \rightarrow \frac{3}{4}  }   \frac{J_2(h)}{J_0(h)}   
     = 
     \frac{\int_{0}^{\sqrt{3}}  u^2 \sqrt{-u^2+\frac{1}{6}u^{4} +  \frac{3}{2}    }
      ~ \mathrm{d} u }{  \int_{0}^{\sqrt{3}}  \sqrt{-u^2+\frac{1}{6}u^{4} +  \frac{3}{2}    }
      ~ \mathrm{d} u    }
      =  \frac{     \frac{3 \sqrt{2}}{5}   }{      \sqrt{2}    }=\frac{3}{5}.
  \end{equation}   

 When $n=4,$
 \begin{equation}\label{bc5-a23-1}
    \begin{split}
   \lim_{h \rightarrow \frac{\sqrt{5}}{3}  }   \tilde{B}_4(h) 
    & =   \lim_{h \rightarrow \frac{\sqrt{5}}{3}  }   \frac{J_4(h)}{J_0(h)}   
     = \frac{\int_{0}^{  \sqrt[4]{5}  }  u^4 \sqrt{-u^2+\frac{1}{15}u^{6} +  \frac{2\sqrt{5}}{3}    }
      ~ \mathrm{d} u }{  \int_{0}^{  \sqrt[4]{5}  }  \sqrt{-u^2+\frac{1}{15}u^{6} +  \frac{2\sqrt{5}}{3}    }
      ~ \mathrm{d} u    }\\
     &  =  \frac{ -  \frac{15}{16}  \sqrt{5} \left(2+\sqrt{3} \ln 2 - 2 \sqrt{3} \ln (1+\sqrt{3})\right)    }{   
       -\frac{1}{4} \sqrt{15} \ln (2-\sqrt{3})  }\\
      & =\frac{5 \left(2 \sqrt{3}+\ln (26-15 \sqrt{3})\right)}{  4 \ln (2-\sqrt{3})  }.
          \end{split}
  \end{equation}   
\end{remark}

 \begin{proposition}\label{theorem-2}
If  $ \tilde{B}_n(h)
\neq (1- \frac{1}{c})$
for any $ h \in (0,d_n),$ then there exists no limit cycle in system  (\ref{16}).
  If there exists a real number  $ \tilde{h} \in (0,d_n)$ such that  
   $ \tilde{B}_n(\tilde{h})
     = (1- \frac{1}{c}),$
 then there exists only one limit cycle in system  (\ref{16}).
 
\end{proposition}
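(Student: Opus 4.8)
The plan is to read off the zeros of the Abelian integral $I(h)$ from (\ref{17}) and then invoke Lemma \ref{Le3}, the only nontrivial input being the strict monotonicity of $\tilde{B}_n$ proved in Proposition \ref{theorem-1}. First I would record two sign facts: since $\Gamma_h$ is a nontrivial periodic orbit encircling the center $O(0,0)$, Green's theorem gives $B_0(h)=\oint_{\Gamma_h} v\,\mathrm{d} u>0$ for every $h\in(0,d_n)$, and $\sqrt{c}>0$. Combining these with (\ref{17}) rewritten as
\[
 I(h)=\sqrt{c}\,B_0(h)\left(\left(1-\frac{1}{c}\right)-\tilde{B}_n(h)\right),
\]
we see that on $(0,d_n)$ the zero set of $I$ coincides with the zero set of $\phi(h):=\left(1-\frac{1}{c}\right)-\tilde{B}_n(h)$. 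By Proposition \ref{theorem-1}, $\tilde{B}_n^{\prime}(h)>0$ on $(0,d_n)$, so $\phi$ is strictly decreasing; hence $\phi$ (and therefore $I$) is not identically zero — which is a hypothesis of Lemma \ref{Le3} — and $\phi$ has at most one zero in $(0,d_n)$.

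For the first statement, assume $\tilde{B}_n(h)\neq 1-\frac{1}{c}$ for all $h\in(0,d_n)$; then $\phi(h)\neq 0$, hence $I(h)\neq 0$, for every $h\in(0,d_n)$. By the first part of Lemma \ref{Le3}, a limit cycle of (\ref{16}) bifurcating from a periodic orbit $\Gamma_{\tilde h}$ of the unperturbed system would force $I(\tilde h)=0$, which is impossible; therefore, for $\epsilon>0$ sufficiently small, system (\ref{16}) has no limit cycle in the period annulus $\bigcup_{h\in(0,d_n)}\Gamma_h$.

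For the second statement, suppose there is $\tilde h\in(0,d_n)$ with $\tilde{B}_n(\tilde h)=1-\frac{1}{c}$, i.e. $\phi(\tilde h)=0$ and thus $I(\tilde h)=0$. Differentiating the displayed formula for $I$ and using that the term carrying $B_0^{\prime}(\tilde h)$ is multiplied by $\phi(\tilde h)=0$, we obtain $I^{\prime}(\tilde h)=-\sqrt{c}\,B_0(\tilde h)\,\tilde{B}_n^{\prime}(\tilde h)<0$, so $I^{\prime}(\tilde h)\neq 0$. By the second part of Lemma \ref{Le3}, exactly one limit cycle of (\ref{16}) bifurcates from $\Gamma_{\tilde h}$; and since $\phi$ is strictly monotone, $\tilde h$ is its unique zero, so this is the only limit cycle of (\ref{16}).

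The computations here are all routine; the one point that deserves care is the sign bookkeeping — verifying $B_0(h)>0$ so that the zeros of $I$ and of $\phi$ genuinely coincide, and that $I^{\prime}(\tilde h)\neq 0$ — and both facts ultimately rest on Proposition \ref{theorem-1}. It is also worth stating explicitly that ``limit cycle in system (\ref{16})'' is meant in the sense of Lemma \ref{Le3}, namely a limit cycle that, for all sufficiently small $\epsilon>0$, lies near and bifurcates from the period annulus of the unperturbed Hamiltonian system.
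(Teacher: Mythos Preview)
Your proof is correct and follows essentially the same route as the paper: establish $B_0(h)>0$ via Green's theorem so that the zeros of $I(h)$ coincide with those of $(1-\tfrac{1}{c})-\tilde{B}_n(h)$, then invoke Lemma~\ref{Le3} together with the strict monotonicity of $\tilde{B}_n$ from Proposition~\ref{theorem-1}. Your version is slightly more explicit than the paper's (you spell out $I'(\tilde h)=-\sqrt{c}\,B_0(\tilde h)\,\tilde{B}_n'(\tilde h)$ and note that $I\not\equiv 0$), but the argument is the same.
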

\begin{proof} 

When $h \rightarrow 0$, the periodic annulus $\Gamma_h$ tend to the  point $(0,0)$. 
When $h \rightarrow  d_n$, the periodic annulus $\Gamma_h$ go to the heteroclinic orbit connecting
the two saddles  $(- \sqrt[n]{n+1},0)$ and $ (\sqrt[n]{n+1},0). $
Recall that the Abelian integral of
system  (\ref{16}) is
\begin{equation}\label{34}
I(h)=   \sqrt{c}   B_0(h)  \left(       (1-\frac{1}{c}) - \tilde{B}_n(h)        \right). \\
\end{equation}
By  Green formula, we have
\begin{equation}\label{18}
  B_0(h)= \oint_{\Gamma_h}   v ~\mathrm{d} u =\iint_{\hat{D}} \mathrm{d}u \mathrm{d}v=S_{\hat{D}}>0,
\end{equation}
where $S_{\hat{D}}$ represents the area of $\hat{D}$ which is the region enclosed by the curve $\Gamma_h$, 
 and the direction of curve  $\Gamma_h$ rotates clockwise. 
 If  $ \tilde{B}_n(h) \neq (1- \frac{1}{c})$
for any $ h \in (0,d_n),$ then $ I(h) \neq 0$
for any $ h \in (0,d_n).$ 
By using Lemma \ref{Le3}, we get the first part of Proposition \ref{theorem-2}.

By Proposition \ref{theorem-1}, we obtain that $\tilde{B}_n(h)$ is 
  monotonic  on  interval  $  (0,d_n).$
   If  $ \tilde{B}_n(\tilde{h}) = (1- \frac{1}{c}),$
 then $\tilde{h}$ is the unique root of $I(h)=0,$ and $I^\prime(\tilde{h}) \neq 0.$
By using Lemma \ref{Le3} again, we complete the second part of Proposition \ref{theorem-2}.
 \end{proof}


By using (\ref{34}), we obtain that when $\epsilon \rightarrow 0,$ there exists a periodic solution if
\begin{equation}\label{35}
 \tilde{B}_n(h) =  1-\frac{1}{c_0(h)}. 
\end{equation}
That is 
\begin{equation}\label{36}
 c_0(h)=  \frac{1}{1- \tilde{B}_n(h)}. 
\end{equation}
Since $ \tilde{B}_n^\prime(h) > 0 $  for $ h \in (0,d_n) $ 
and $\lim\limits_{h \rightarrow 0} \tilde{B}_n(h)=0, $
 we have 
 \begin{equation}\label{36-z1}
 c_0^\prime(h)=  \frac{ \tilde{B}_n^\prime(h)}{(1- \tilde{B}_n(h))^2}>0 
\end{equation}
 for $ h \in (0,d_n), $
 and  $\lim\limits_{h \rightarrow 0} c_0(h)=1. $
 
Using (\ref{z-w-2}), we have
 \begin{equation}\label{z-w-2-z1}
\frac{\partial  \hat{\Psi} }{\partial   c} (u_0(h),c_0,0)
  =  \frac{1 }{2\sqrt{c_0}}   \int_{\tau_2}^{\tau_1}  { u_1^{\prime\prime}  }^2 \mathrm{d} \tau 
        +   \frac{1 }{2 c_0  \sqrt{c_0}}   \int_{\tau_2}^{\tau_1}  {u_1^\prime}^2  \mathrm{d} \tau>0.
\end{equation}
  By implicit function theorem, we obtain that
  there exists exactly one smooth function $c_h(\epsilon)=c(\epsilon,h)$ for $ h \in (0,d_n) $
 and $\epsilon \in (0,\epsilon_n^*)$ such that
  \begin{equation}\label{z-w-2-z3}
 \hat{\Psi}  (u_0(h),c(\epsilon,h),\epsilon)=0~~~~~
 \mathrm{for} ~~ 0<h<d_n,  ~ 0<\epsilon<\epsilon_n^*.
\end{equation}
 We complete the proof of Theorem \ref{theorem-0}.


 Finally, we   get the upper bound of  $ c_0(h) $  for  $n=2$ and $n=4$.
 When $n=2,$ by (\ref{bc5-a23}) we have
 \begin{equation}\label{z-s-36}
  \lim_{h \rightarrow \frac{3}{4}  }    c_0(h)   =   \lim_{h \rightarrow \frac{3}{4}  }    \frac{1}{1- \tilde{B}_n(h)}= \frac{5}{2}, 
\end{equation}
which is agree with the result of Chen et al. \cite{ChenA2018}.
 When $n=4,$ by (\ref{bc5-a23-1})  we obtain 
 \begin{equation}\label{z-s-36-1}
  \lim_{h \rightarrow \frac{\sqrt{5}}{3}  }    c_0(h)   
  =   \lim_{h \frac{\sqrt{5}}{3} }    \frac{1}{1- \tilde{B}_4(h)}
  = \frac{1}{       1  -  \frac{5 \left(2 \sqrt{3}+\ln (26-15 \sqrt{3})\right)}{  4 \ln (2-\sqrt{3})  }        }.
\end{equation}
 The graph of the limit wave speed $c_0(h)$ for $n=4$   at $h\in(0,\frac{\sqrt{5}}{3})$
 is shown in Figure \ref{Fig03}.  The result shows that our theoretical analysis is consistent with the numerical simulation.
\begin{figure}[H]
\centering
\includegraphics[width=3in]{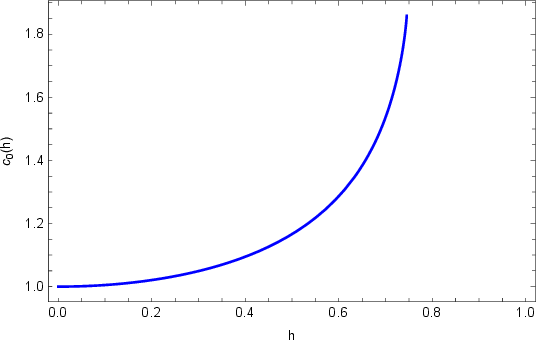}
\hspace{0.2in}
\caption{ The graph of the limit wave speed $c_0(h)$ for $n=4.$  \label{Fig03}}
\end{figure}

\noindent
{\bf Acknowledgements}
\vskip3mm
This work is supported by the Key  Project of  Education Department of Hunan Province (No.24A0667)
 and by the National Natural Science Foundation of China (No.12475049).

\subsection*{Ethical approval}
All authors agree to publish this article.

\subsection*{Competing interests}
The authors declare that they have no conflict of interests.

\subsection*{Authors' contributions}
All authors contributed equally to writing of this paper. All authors read and approved the final manuscript.

\subsection*{Availability of data and materials}
No data were used to support this study.

\end{document}